\DeclareMathAlphabet{\EuRm}{U}{eur}{m}{n}
\SetMathAlphabet{\EuRm}{bold}{U}{eur}{b}{n}
\providecommand{\texorpdfstring}[2]{#1}
\newtheoremstyle{slplain}
 {.5\baselineskip\@plus.2\baselineskip\@minus.2\baselineskip}
 {.5\baselineskip\@plus.2\baselineskip\@minus.2\baselineskip}
 {\slshape}
 {}
 {\bfseries}
 {.}
 { }
 {}
\numberwithin{equation}{section}
\theoremstyle{slplain}
\newtheorem{thm}[equation]{Theorem}  
\newtheorem{prop}[equation]{Proposition}  
\theoremstyle{definition}
\newtheorem{defn}[equation]{Definition} 
\theoremstyle{remark}
\newtheorem{notation}[equation]{Notation}
\newcommand{\thmref}{Theorem~\ref}
\newcommand{\propref}{Proposition~\ref}
\newcommand{\defref}{Definition~\ref}
\newcommand{\diagref}{Diagram~\ref}
\newcommand{\secref}{Section~\ref}
\newcommand{\bdry}{\partial}
\newcommand{\phitilde}{\widetilde\phi}
\newcommand{\starhat}{\mathbin{\hat *}}
\newcommand{\startilde}{\mathbin{\tilde *}}
\newcommand{\limone}{\mathrm{lim}^{1}}
\DeclareMathOperator{\Ker}{Ker}
\newcommand{\homotopic}{\simeq}
\newcommand{\Comma}{\rlap{\enspace ,}}
\newcommand{\Period}{\rlap{\enspace .}}
\begin{document}

\title[The homotopy groups of the inverse limit of a tower of
      fibrations]{The homotopy groups of the inverse limit\\
      of a tower of fibrations}

\author{Philip S. Hirschhorn}

\address{Department of Mathematics\\
   Wellesley College\\
   Wellesley, Massachusetts 02481}

\email{psh@math.mit.edu}

\urladdr{http://www-math.mit.edu/\textasciitilde psh}

\date{May 16, 2015}

\begin{abstract}
  We carefully present an elementary proof of the well known theorem
  that each homotopy group (or, in degree zero, pointed set) of the
  inverse limit of a tower of fibrations maps naturally onto the
  inverse limit of the homotopy groups (or, in degree zero, pointed
  sets) of the spaces in the tower, with kernel naturally isomorphic
  to $\limone$ of the tower of homotopy groups of one dimension
  higher.
\end{abstract}

\maketitle

The main theorem here is \thmref{thm:HomGrps}.  This is due to Gray
\cite{Gray}, Quillen \cite{RHT}*{Proposition~3.8}, Vogt \cite{Vogt},
Cohen \cites{Cohen-aarhus,Cohen}, and Bousfield and Kan
\cite{YM}*{Theorem 3.1 in Chapter IX, section 3}, some of which only
consider the case of simply connected spaces or spaces with abelian
fundamental groups.  The proof given here is at least morally the one
in \cite{Cohen}.

\section{\texorpdfstring{$\limone$}{lim1} of a tower of not necessarily
  abelian groups}
\label{sec:lim1}

This definition is as in \cite{Vogt} and \cite{YM}*{Chapter IX,
  section 2}.

\begin{defn}
  \label{def:lim1}
  If
  \begin{displaymath}
    \xymatrix{
      {\cdots} \ar[r]
      & {G_{n+1}} \ar[r]^{p_{n+1}}
      & {G_{n}} \ar[r]^{p_{n}}
      & {G_{n-1}} \ar[r]^{p_{n-1}}
      & {\cdots} \ar[r]^{p_{1}}
      &{G_{0}}
    }
  \end{displaymath}
  is a tower of groups, then we define a left action of the product
  group $\prod_{n\ge 0} G_{i}$ on the product set $\prod_{n\ge 0}
  G_{i}$ by letting the action of an element $(g_{n})_{n\ge 0} =
  (g_{0}, g_{1}, g_{2}, \ldots)$ on an element $(h_{n})_{n \ge 0} =
  (h_{0}, h_{1}, h_{2}, \ldots)$ be
  \begin{displaymath}
    \bigl(g_{n}h_{n}\bigl(p_{n+1}(g_{n+1}^{-1})\bigr)\bigr)_{n\ge 0}=
    \bigl(g_{0}h_{0}\bigl(p_{1}(g_{1}^{-1})\bigr),
      g_{1}h_{1}\bigl(p_{2}(g_{2}^{-1})\bigr),
      g_{2}h_{2}\bigl(p_{3}(g_{3}^{-1})\bigr),
      \ldots
    \bigr) \Period
  \end{displaymath}
  We define $\limone$ of that tower of groups to be the pointed set
  that is the set of orbits of this action.  If all of the groups
  $G_{n}$ are abelian, then $\limone$ of the tower is isomorphic to
  the underlying pointed set of the cokernel of the homomorphism
  $f\colon \prod_{n\ge 0} G_{n} \to \prod_{n\ge 0} G_{n}$ defined by
  \begin{displaymath}
    f(g_{0}, g_{1}, g_{2},\ldots) =
    \bigl(g_{0}\bigl(p_{1}(g_{1}^{-1})\bigr),
    g_{1}\bigl(p_{2}(g_{2}^{-1})\bigr),
    g_{2}\bigl(p_{3}(g_{3}^{-1})\bigr),\ldots\bigr) \Comma 
  \end{displaymath}
  and thus has a natural abelian group structure.
\end{defn}

\section{The main theorems}
\label{sec:MainThm}

\begin{thm}
  \label{thm:HomGrps}
  Let
  \begin{displaymath}
    \xymatrix{
      {\cdots} \ar[r]
      & {X_{n+1}} \ar[r]^{p_{n+1}}
      & {X_{n}} \ar[r]^{p_{n}}
      & {X_{n-1}} \ar[r]^{p_{n-1}}
      & {\cdots} \ar[r]^{p_{1}}
      & {X_{0}}
    }
  \end{displaymath}
  be a tower of fibrations of pointed spaces.  For every choice of
  basepoint in $\lim_{n}X_{n}$ (whose image is then taken as the
  basepoint in each $X_{n}$ for $n \ge 0$) and every $k \ge 0$ there
  is a natural short exact sequence
  \begin{displaymath}
    \xymatrix{
      {1} \ar[r]
      & {\limone_{n} \pi_{k+1} X_{n}} \ar[r]
      & {\pi_{k}\lim_{n} X_{n}} \ar[r]^{P}
      & {\lim_{n} \pi_{k}X_{n}} \ar[r]
      & {1}
    }
  \end{displaymath}
  of abelian groups if $k \ge 2$, of groups if $k = 1$, and of pointed
  sets if $k = 0$.  If $k = 0$, all of the spaces $X_{n}$ are
  H-spaces, and all of the maps $p_{n}$ are H-maps, then this is a
  short exact sequence of groups.
\end{thm}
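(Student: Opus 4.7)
The plan is to define $P$ from the canonical projections, prove its surjectivity by inductively rectifying representatives through the fibrations, and identify its kernel with $\limone_n \pi_{k+1}X_n$ by measuring the failure of chosen null-homotopies to cohere. First I would define $P$: the projections $q_m\colon \lim_n X_n \to X_m$ satisfy $p_{m+1}q_{m+1}=q_m$, so the induced maps on $\pi_k$ assemble into the natural map $P\colon \pi_k\lim_n X_n \to \lim_n\pi_k X_n$. For surjectivity, given a compatible family $([f_n])_{n\ge 0}$, I would inductively rectify the $f_n$: set $g_0=f_0$; having $g_n$ with $[g_n]=[f_n]$, pick any representative of $[f_{n+1}]$, observe that its composition with $p_{n+1}$ is homotopic to $g_n$, and lift that homotopy through the fibration $p_{n+1}$ to obtain $g_{n+1}$ with $p_{n+1}g_{n+1}=g_n$ on the nose and $[g_{n+1}]=[f_{n+1}]$. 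The resulting compatible family assembles into a map $S^k\to\lim_n X_n$ whose $P$-image is $([f_n])$.

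For the kernel, an element $[f]\in\ker P$ is represented by $f\colon S^k\to\lim_n X_n$ such that each $q_nf$ is null-homotopic, so one can choose pointed null-homotopies $H_n\colon D^{k+1}\to X_n$ extending $q_nf$. Both $p_{n+1}H_{n+1}$ and $H_n$ are null-homotopies of $q_nf$, so they glue along their common $S^k$-boundary to a pointed map $\alpha_n\colon S^{k+1}\to X_n$. I would define $\Phi\colon\ker P\to\limone_n\pi_{k+1}X_n$ by $[f]\mapsto[(\alpha_n)]$. Well-definedness reduces to checking that altering each $H_n$ by some $g_n\in\pi_{k+1}(X_n)$ shifts $(\alpha_n)$ by precisely the $\prod_n\pi_{k+1}X_n$-action of \defref{def:lim1}, where the contribution $p_{n+1}(g_{n+1}^{-1})$ comes from the change to $H_{n+1}$ seen through the projection. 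For injectivity, a $\limone$-triviality produces new null-homotopies $H_n'$ with $p_{n+1}H_{n+1}'=H_n'$ exactly, which assemble into a null-homotopy of $f$ in $\lim_n X_n$. For surjectivity, given $(\alpha_n)$, I would cut each $\alpha_n$ along an equator into two $(k+1)$-disks and use the homotopy lifting property to inductively adjust them so that their common boundaries $S^k\to X_n$ become strictly compatible across $n$, producing an $f$ with $\Phi([f])=[(\alpha_n)]$.

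The main obstacle I expect is the surjectivity of $\Phi$, which is where the fibration hypothesis really earns its keep: infinitely many disk-lifts must be arranged to remain compatible across the entire tower, and the well-definedness calculation in (i) is the same mechanism run in reverse. The group structure statements are then routine bookkeeping: for $k\ge 1$ both $P$ and $\Phi$ respect the products on homotopy groups coming from the cogroup structure of $S^k$, yielding a short exact sequence of groups (abelian when $k\ge 2$); for $k=0$ the sequence is only one of pointed sets in general, but when each $X_n$ is an H-space and each $p_n$ an H-map, the compatible H-structures endow $\pi_0\lim_n X_n$ and $\lim_n\pi_0 X_n$ with group structures for which $P$ and $\Phi$ are homomorphisms, upgrading the conclusion to a short exact sequence of groups.
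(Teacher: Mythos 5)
Your overall strategy coincides with the paper's: surjectivity of $P$ by inductive rectification through the fibrations, the kernel invariant obtained by gluing a nullhomotopy $H_n$ of $q_nf$ to $p_{n+1}H_{n+1}$ to get a class in $\pi_{k+1}X_n$, well-definedness via the $\limone$ action, and surjectivity and injectivity of $\Phi$ by further inductive lifting. There is, however, one step that as stated does not suffice for the full claim. You establish injectivity of $\Phi$ only by showing that $\Phi([f])$ trivial forces $[f]$ trivial. For $k\ge 1$ this is enough, since $\Phi$ is then a homomorphism of groups; but for $k=0$ the source $\Ker P\subseteq\pi_{0}\lim_{n}X_{n}$ and the target $\limone_{n}\pi_{1}X_{n}$ (an orbit set of a possibly nonabelian action) are only pointed sets, and a map of pointed sets with trivial kernel need not be injective. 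Since the theorem asserts a short exact sequence of pointed sets --- in particular that $\limone_{n}\pi_{1}X_{n}$ injects onto the fiber of $P$ over the basepoint --- you must run the argument for two arbitrary classes: given $f,\bar f$ with $\Phi([f])=\Phi([\bar f])$, the defining relation of the $\limone$ orbit supplies classes $g_{n}$ with which to modify the nullhomotopies of $f$ so that, after the same inductive rectification through the fibrations, one obtains a strictly compatible family of homotopies from $q_{n}\bar f$ to $q_{n}f$, hence a homotopy $f\homotopic\bar f$ in $\lim_{n}X_{n}$. This is exactly what \propref{prop:injective} does, and it is why that proposition compares two elements rather than testing the kernel.

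A second, smaller omission: you check that $\Phi$ is independent of the choice of nullhomotopies, but not that it depends only on the homotopy class of $f$. Changing $f$ within its class changes the maps $q_{n}f$ being nullhomotoped, so this is not an instance of ``altering $H_{n}$ by some $g_{n}\in\pi_{k+1}X_{n}$'' and needs its own (easy) argument: compose the old nullhomotopies with the levelwise projections of a homotopy from $f$ to $f'$, as in \propref{prop:DefOnHomCls}. With these two points repaired your outline matches the paper's proof essentially step for step, including the $\starhat$-type bookkeeping needed to see that $\Phi$ is a homomorphism for $k\ge1$ and the H-space argument for $k=0$.
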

The proof of \thmref{thm:HomGrps} is in \secref{sec:prfmain}.

\begin{thm}
  \label{thm:limwe}
  If
  \begin{displaymath}
    \xymatrix{
      {\cdots} \ar[r]
      & {X_{n+1}} \ar[r]^-{p_{n+1}} \ar[d]^{f_{n+1}}
      & {X_{n}} \ar[r]^-{p_{n}} \ar[d]^{f_{n}}
      & {X_{n-1}} \ar[r]^-{p_{n-1}} \ar[d]^{f_{n-1}}
      & {\cdots} \ar[r]^-{p_{1}}
      & {X_{0}} \ar[d]^{f_{0}}\\
      {\cdots} \ar[r]
      & {Y_{n+1}} \ar[r]_-{q_{n+1}}
      & {Y_{n}} \ar[r]_-{q_{n}}
      & {Y_{n-1}} \ar[r]_-{q_{n-1}}
      & {\cdots} \ar[r]_-{q_{1}}
      & {Y_{0}}
    }
  \end{displaymath}
  is a map of towers of fibrations of topological spaces such that
  $f_{n}$ is a weak equivalence for every $n \ge 0$, then the induced
  map of limits $f\colon \lim_{n} X_{n} \to \lim_{n} Y_{n}$ is a weak
  equivalence.
\end{thm}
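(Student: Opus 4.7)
The plan is to apply \thmref{thm:HomGrps} to both towers and do a Five Lemma style comparison. Assuming $\lim_n X_n$ is nonempty (to be addressed separately), pick a basepoint $x \in \lim_n X_n$, carry its images to basepoints in every $X_n$ and $Y_n$, and extract from \thmref{thm:HomGrps} the commutative ladder of short exact sequences
\begin{displaymath}
  \xymatrix{
    {1} \ar[r]
    & {\limone_{n}\pi_{k+1}X_{n}} \ar[r] \ar[d]
    & {\pi_{k}\lim_{n}X_{n}} \ar[r] \ar[d]
    & {\lim_{n}\pi_{k}X_{n}} \ar[r] \ar[d]
    & {1}\\
    {1} \ar[r]
    & {\limone_{n}\pi_{k+1}Y_{n}} \ar[r]
    & {\pi_{k}\lim_{n}Y_{n}} \ar[r]
    & {\lim_{n}\pi_{k}Y_{n}} \ar[r]
    & {1}
  }
\end{displaymath}
for each $k \ge 0$. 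Because each $f_n$ is a weak equivalence, the map $\pi_j X_n \to \pi_j Y_n$ is an isomorphism for every $n$ and $j$, so by naturality of $\lim$ and of the orbit construction of \defref{def:lim1} the two outer verticals above are isomorphisms of groups (or bijections of pointed sets, in the $k = 0$ case).

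For $k \ge 1$ the classical Five Lemma now yields that $\pi_k f$ at the basepoint $x$ is an isomorphism. For $k = 0$ the Five Lemma is not directly available, since the middle term is only a pointed set; I would instead chase the ladder using the fact from \defref{def:lim1} that the fibers of $P$ in \thmref{thm:HomGrps} are the orbits of the $\limone$ action and that $\pi_0 f$ is equivariant via the induced isomorphism on $\limone$. Injectivity of $\pi_0 f$ reduces to a short chase using commutativity, exactness, and bijectivity of the outer verticals, and surjectivity is the dual chase. Running the same argument at each basepoint $x' \in \lim_n X_n$ then gives $\pi_k f$ iso at $x'$ for every $k \ge 1$, so $f$ is a weak equivalence once nonemptiness and bijectivity of $\pi_0 f$ are established.

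The remaining loose end is nonemptiness: $\lim_n X_n$ is nonempty if and only if $\lim_n Y_n$ is. For the nontrivial direction, given $(y_n) \in \lim_n Y_n$, I would build $(x_n) \in \lim_n X_n$ inductively, using path-lifting for the fibration $q_{n+1}$ to move $y_{n+1}$ into the fiber $q_{n+1}^{-1}(f_n(x_n))$ and then using that $f_{n+1}$ restricts to a weak equivalence between the fiber of $p_{n+1}$ over $x_n$ and the fiber of $q_{n+1}$ over $f_n(x_n)$ (a consequence of the long exact sequences of the two fibrations plus the ordinary Five Lemma) to find a compatible $x_{n+1} \in p_{n+1}^{-1}(x_n)$ whose image lies in the component of $y_{n+1}$.

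The main obstacle will be the $k = 0$ pointed-set case, where one cannot invoke the Five Lemma wholesale and must do a direct chase with the orbit structure from \defref{def:lim1}; the other main technical input is the standard fact that a weak equivalence between total spaces of two fibrations restricts to a weak equivalence on fibers, which is needed only for the nonemptiness reduction.
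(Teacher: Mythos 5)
Your overall strategy is the paper's: the ladder of short exact sequences from \thmref{thm:HomGrps}, the five lemma for $k \ge 1$, and separate treatments of nonemptiness and of $\pi_{0}$. But the two places you defer to "a short chase" or a "standard fact" are exactly where the content lies, and in both places your proposed tool is not actually available. For the $k=0$ case: \thmref{thm:HomGrps} identifies only the fiber of $P$ over the \emph{basepoint} of $\lim_{n}\pi_{0}X_{n}$ with $\limone_{n}\pi_{1}X_{n}$; it says nothing about the other fibers of $P$. Your claim that all fibers of $P$ are orbits of a $\limone$-action for which $\pi_{0}f$ is equivariant is a strictly stronger statement (the Bousfield--Kan refinement) that would itself need proof, and without it an exact sequence of pointed sets gives no control away from the basepoint fiber, so the chase does not close up. The paper's device, which you should supply, is to \emph{re-choose the basepoint}: to study the fiber of $P$ over an arbitrary class, move the basepoint of $\lim_{n}X_{n}$ into a path component representing it, so that the ladder for the new basepoint puts the fiber in question at the $\limone$ spot; injectivity and surjectivity of $\pi_{0}f$ then follow from short chases using that $\phi$ and $\psi$ are bijections.

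For nonemptiness, your fiberwise argument can be made to work but has a gap as stated: $p_{n+1}^{-1}(x_{n})$ could a priori be empty (a fibration need not be surjective), so you must first produce some point of $X_{n+1}$ over the component of $x_{n}$ before any fiber comparison makes sense; and the $\pi_{0}$ part of the fiber comparison is itself not a bare five-lemma consequence of the long exact sequences, since it involves the action of $\pi_{1}$ of the base on $\pi_{0}$ of the fiber. The paper's induction avoids all of this machinery: choose $x_{n+1}$ with $f_{n+1}(x_{n+1})$ in the component of $y_{n+1}$, note that $f_{n}p_{n+1}(x_{n+1}) = q_{n+1}f_{n+1}(x_{n+1})$ lies in the component of $y_{n}$, conclude from $\pi_{0}$-injectivity of $f_{n}$ that $p_{n+1}(x_{n+1})$ lies in the component of $x'_{n}$, and lift a path in $X_{n}$ through the fibration $p_{n+1}$ to correct $x_{n+1}$ to a point $x'_{n+1}$ with $p_{n+1}(x'_{n+1}) = x'_{n}$. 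Only path lifting in the top tower and $\pi_{0}$-bijectivity of the $f_{n}$ are needed.
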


\begin{proof}
  This theorem mostly follows from \thmref{thm:HomGrps} and the five
  lemma, but care must be taken to show that the set of path
  components is mapped isomorphically.  We must first show that
  $\lim_{n}X_{n}$ is nonempty if and only if $\lim_{n}Y_{n}$ is
  nonempty.  If $\lim_{n}X_{n}$ is nonempty, then the existence of the
  natural map $\lim_{n}X_{n} \to \lim_{n}Y_{n}$ implies that
  $\lim_{n}Y_{n}$ is nonempty.

  Conversely, if $\lim_{n}Y_{n}$ is nonempty, then we can choose a
  point $(y_{n})_{n \ge 0}$ in $\lim_{n}Y_{n}$, and for each $n \ge 0$
  we can choose a point $x_{n} \in X_{n}$ such that $f_{n}(x_{n})$ is
  in the same path component as $y_{n}$.  We will now inductively
  define points $x'_{n} \in X_{n}$ such that $x'_{n}$ is in the same
  path component as $x_{n}$ and $p_{n+1}(x'_{n+1}) = x'_{n}$ for all
  $n \ge 0$.  We begin the induction by letting $x'_{0} = x_{0}$.  If
  $n \ge 0$ and we've defined $x'_{n}$, then $q_{n+1}f_{n+1}(x_{n+1})
  = f_{n}p_{n+1}(x_{n+1})$ is in the same path component as
  $q_{n+1}(y_{n+1}) = y_{n}$, and so (since $f_{n}$ is a weak
  equivalence) $p_{n+1}(x_{n+1})$ is in the same path component as
  $x'_{n}$.  Thus, we can choose a path $\alpha\colon I \to X_{n}$
  from $p_{n+1}(x_{n+1})$ to $x'_{n}$ and then lift $\alpha$ to a path
  $\widetilde\alpha\colon I \to X_{n+1}$ such that
  $\widetilde\alpha(0) = x_{n+1}$; we let $x'_{n+1} =
  \widetilde\alpha(1)$.  This completes the induction, and so
  $\lim_{n}X_{n}$ contains the point $(x'_{n})_{n\ge 0}$.

  In the case in which $\lim_{n}X_{n}$ and $\lim_{n}Y_{n}$ are
  nonempty, for every choice of basepoint in $\lim_{n}X_{n}$ (whose
  image is then taken as the basepoint in each of $\lim_{n}Y_{n}$,
  $X_{n}$, and $Y_{n}$ for $n \ge 0$) and every $k \ge 0$
  \thmref{thm:HomGrps} implies that we have a map of short exact
  sequences
  \begin{equation}
    \label{eq:ses}
    \vcenter{
      \xymatrix{
        {1} \ar[r]
        & {\limone_{n} \pi_{k+1} X_{n}} \ar[r] \ar[d]_{\phi}
        & {\pi_{k}\lim_{n}X_{n}} \ar[r]^{P} \ar[d]_{f_{*}}
        & {\lim_{n}\pi_{k}X_{n}} \ar[r] \ar[d]^{\psi}
        & {1}\\
        {1} \ar[r]
        & {\limone_{n} \pi_{k+1} Y_{n}} \ar[r]
        & {\pi_{k}\lim_{n}Y_{n}} \ar[r]^{Q}
        & {\lim_{n}\pi_{k}Y_{n}} \ar[r]
        & {1}
      }
    }
  \end{equation}
  of groups if $k>0$ and of pointed sets if $k=0$, and both $\phi$ and
  $\psi$ are isomorphisms.  For $k > 0$ the five lemma (non-abelian,
  if $k=1$) applied to \diagref{eq:ses} shows that for every choice of
  basepoint in $\lim_{n}X_{n}$ the map $\pi_{k}\lim_{n}X_{n} \to
  \pi_{k}\lim_{n}Y_{n}$ is an isomorphism, and so it remains only to
  show that the set of path components of $\lim_{n}X_{n}$ maps
  isomorphically to the set of path components of $\lim_{n}Y_{n}$.

  To see that the map of path components is surjective, choose some
  basepoint for $\lim_{n}X_{n}$ and consider \diagref{eq:ses}.  Let $a
  \in \pi_{0}\lim_{n}Y_{n}$; then we can choose $b \in
  \pi_{0}\lim_{n}X_{n}$ such that $P(b) = \psi^{-1}Q(a)$, and we will
  have that $Q\bigl(f_{*}(b)\bigr) = Q(a)$.  Now choose a new
  basepoint for $\lim_{n}X_{n}$ that is in the path component $b$ of
  $\lim_{n}X_{n}$, and consider this version of \diagref{eq:ses}.  The
  path component $f_{*}(b)$ of $\lim_{n}Y_{n}$ is now the path
  component of the basepoint, and so $a \in {\limone_{n} \pi_{k+1}
    Y_{n}}$.  Thus, there is an element $a' \in {\limone_{n} \pi_{k+1}
    X_{n}}$ such that $\phi(a') = a$, and $a'$ is an element of
  $\pi_{0}\lim_{n}X_{n}$ that goes to $a$ under $f_{*}$.

  To see that the map of path components is injective, let $a$ and $b$
  be path components of $\lim_{n}X_{n}$ that go to the same path
  component of $\lim_{n}Y_{n}$.  Choose a basepoint in the path
  component $a$, and consider \diagref{eq:ses}.  We have $f_{*}(a) =
  f_{*}(b)$, and so $P(a) = P(b)$, and since $a$ is the path component
  of the basepoint both $a$ and $b$ are elements of ${\limone_{n}
    \pi_{k+1} X_{n}}$.  If $a \neq b$, then $\phi(a) \neq \phi(b)$,
  and so $\phi(a)$ and $\phi(b)$ are distinct elements of
  $\pi_{0}\lim_{n}Y_{n}$.  Since those are the same element of
  $\pi_{0}\lim_{n}Y_{n}$, it must be that $a = b$ in $\lim_{n}X_{n}$.
\end{proof}

\section{Homotopy groups}
\label{sec:HtpyGrps}

\begin{notation}
  \label{not:cubes}
  We let $I$ denote the interval $[0,1]$, and we let $I^{0}$ denote a
  single point.  If $k \ge 1$ we will often denote a point of $I^{k}$
  as $(p,t)$, where $p \in I^{k-1}$ and $t \in I$.
\end{notation}

\begin{defn}
  \label{def:mapcubes}
  If $X$ is a space and $k \ge 0$, we will represent elements of
  $\pi_{k}X$ by maps $\alpha\colon I^{k} \to X$ that take the boundary
  of $I^{k}$ to the basepoint, and we will denote the element of
  $\pi_{k}X$ represented by $\alpha$ as $[\alpha]$.

  We will almost always multiply and take inverses of elements of
  $\pi_{k}X$ using the last coordinate of $I^{k}$.
  \begin{enumerate}
  \item If $k \ge 1$ and $\alpha\colon I^{k} \to X$ is a map, then the
    \emph{inverse} $\alpha^{-1}\colon I^{k} \to X$ of $\alpha$ is the
    map defined by $\alpha^{-1}(p,t) = \alpha(p, 1-t)$.  If $k = 1$
    this is the usual definition of the inverse path.
  \item If $k \ge 1$ and $\alpha,\beta\colon I^{k} \to X$ are maps
    such that $\alpha(p,1)= \beta(p,0)$ for all $p \in I^{k-1}$, then
    the \emph{composition} of $\alpha$ and $\beta$ is the map
    $\alpha*\beta\colon I^{k} \to X$ defined by
    \begin{displaymath}
      (\alpha*\beta)(p,t) =
      \begin{cases}
        \alpha(p, 2t)& \text{if $0 \le t \le \frac{1}{2}$}\\
        \beta(p, 2t-1)& \text{if $\frac{1}{2} \le t \le 1$}\Period
      \end{cases}
    \end{displaymath}
    If $k = 1$, this is the usual definition of the composition of
    paths.
  \end{enumerate}
\end{defn}

\begin{notation}
  \label{not:homotopy}
  If $k \ge 0$ and $\alpha,\beta\colon I^{k} \to X$ are maps such that
  $\alpha(p) = \beta(p)$ for all $p \in \bdry(I^{k})$, then $\alpha
  \homotopic \beta$ will mean that $\alpha$ is homotopic to $\beta$
  relative to the boundary of $I^{k}$.  If $k = 1$, this is the usual
  notion of path homotopy.
\end{notation}

\begin{defn}
  \label{def:nullhomotopy}
  If $k \ge 0$ and $\alpha\colon I^{k} \to X$ is a map taking the
  boundary of $I^{k}$ to the basepoint of $X$, then by a
  \emph{nullhomotopy} of $\alpha$ we will mean a map $\beta\colon
  I^{k+1} \to X$ such that
  \begin{alignat*}{2}
    \beta(p,0) &= *&\quad& \text{for all $p \in I^{k}$}\\
    \beta(p,1) &= \alpha(p)&& \text{for all $p \in I^{k}$, and}\\
    \beta(p,t) &= *&& \text{for all $p \in \bdry I^{k}$} \Period
  \end{alignat*}
  There exists such a nullhomotopy for $\alpha$ if and only if
  $[\alpha] = 0$ in $\pi_{k}X$.
\end{defn}

We will occasionally need to compose maps of cubes using the
\emph{next to last coordinate}.
\begin{defn}
  \label{def:altcomp}
  If $k \ge 1$ and we have maps $\beta,\bar\beta\colon I^{k+1} \to X$
  such that
  \begin{displaymath}
    \beta(t_{1},t_{2},t_{k-1},1,t_{k+1}) =
    \bar\beta(t_{1},t_{2},t_{k-1},0,t_{k+1})
  \end{displaymath}
  then we define
  $\beta_{n}\starhat \bar\beta_{n}\colon I^{k+1} \to X_{n}$ by
  \begin{displaymath}
    (\beta_{n}\starhat\bar\beta_{n})(t_{1}, t_{2}, \ldots, t_{k+1})=
    \begin{cases}
      \beta_{n}(t_{1}, t_{2},\ldots, t_{k-1}, 2t_{k},t_{k+1})&
      \text{if $0 \le t_{k} \le \frac{1}{2}$}\\
      \bar\beta_{n}(t_{1}, t_{2},\ldots, t_{k-1}, 2t_{k}-1,t_{k+1})&
      \text{if $\frac{1}{2} \le t_{k} \le 1$}
    \end{cases}
  \end{displaymath}
\end{defn}
If both $\beta$ and $\bar\beta$ take the boundary of $I^{k+1}$ to the
basepoint of $X$, and thus represent elements of $\pi_{k+1}X$, then we
have $[\beta][\bar\beta] = [\beta*\bar\beta] =
[\beta\starhat\bar\beta]$ in $\pi_{k+1}X$.

\section{Proof of \thmref{thm:HomGrps}}
\label{sec:prfmain}

We prove that the natural map $P\colon \pi_{k}\lim_{n} X_{n} \to
\lim_{n} \pi_{k} X$ is surjective in \propref{prop:surj}, and in the
remainder of this section we discuss the kernel of $P$ and the case in
which the tower consists of H-maps between H-spaces.

\begin{notation}
  \label{not:projections}
  Given a tower of spaces as in \thmref{thm:HomGrps}, for each $i \ge
  0$ we let $P_{i}\colon \lim_{n} X_{n} \to X_{i}$ denote the natural
  projection from the limit.
\end{notation}

\begin{prop}
  \label{prop:surj}
  If $k \ge 0$, the natural map $P\colon \pi_{k} \lim_{n} X_{n} \to
  \lim_{n} \pi_{k}X_{n}$ of \thmref{thm:HomGrps} is surjective.
\end{prop}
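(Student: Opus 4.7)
The plan is to \emph{strictify} the compatible family of homotopy classes. Given representatives $\alpha_{n}\colon I^{k}\to X_{n}$ (with $\alpha_{n}(\bdry I^{k}) = *$) of the given classes $[\alpha_{n}]\in\pi_{k}X_{n}$, I would inductively construct new maps $\alpha'_{n}\colon I^{k}\to X_{n}$ satisfying $\alpha'_{n}\homotopic\alpha_{n}$ rel $\bdry I^{k}$ and, crucially, the strict equality $p_{n+1}\alpha'_{n+1} = \alpha'_{n}$ for every $n\ge 0$. Once this is done, the universal property of the inverse limit assembles the $\alpha'_{n}$ into a single continuous map $\alpha\colon I^{k}\to\lim_{n}X_{n}$; since each $\alpha'_{n}$ sends $\bdry I^{k}$ to the basepoint of $X_{n}$, the map $\alpha$ sends $\bdry I^{k}$ to the basepoint of $\lim_{n}X_{n}$, and $P_{n}\circ\alpha = \alpha'_{n}\homotopic\alpha_{n}$, whence $P([\alpha]) = ([\alpha'_{n}])_{n\ge 0} = ([\alpha_{n}])_{n\ge 0}$ as required.

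The induction begins with $\alpha'_{0} = \alpha_{0}$. Assuming $\alpha'_{0},\ldots,\alpha'_{n}$ have been constructed with the stated properties, compatibility of $([\alpha_{n}])_{n\ge 0}$ in $\lim_{n}\pi_{k}X_{n}$ gives $p_{n+1}\alpha_{n+1}\homotopic\alpha_{n}\homotopic\alpha'_{n}$ rel $\bdry I^{k}$, so there is a homotopy $G\colon I^{k}\times I \to X_{n}$ from $p_{n+1}\alpha_{n+1}$ to $\alpha'_{n}$ that is constant at the basepoint on $\bdry I^{k}\times I$. Applying the homotopy lifting property of $p_{n+1}$ to lift $G$ starting from $\alpha_{n+1}$ on $I^{k}\times\{0\}$ produces $\widetilde G\colon I^{k}\times I \to X_{n+1}$ with $p_{n+1}\widetilde G = G$ and $\widetilde G(-,0) = \alpha_{n+1}$; setting $\alpha'_{n+1} = \widetilde G(-,1)$ then yields $p_{n+1}\alpha'_{n+1} = \alpha'_{n}$, completing the induction.

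The one delicate point, and the main obstacle, is that the lift $\widetilde G$ must remain constant at the basepoint on $\bdry I^{k}\times I$, so that $\alpha'_{n+1}$ actually sends $\bdry I^{k}$ to the basepoint and $\widetilde G$ is a homotopy rel $\bdry I^{k}$ (ensuring $[\alpha'_{n+1}] = [\alpha_{n+1}]$ in $\pi_{k}X_{n+1}$). This goes slightly beyond the absolute homotopy lifting property: one needs the relative lifting property for the pair $(I^{k},\bdry I^{k})$, prescribing $\widetilde G = *$ on $\bdry I^{k}\times I$ in addition to $\widetilde G(-,0)=\alpha_{n+1}$ on $I^{k}\times\{0\}$. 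This is a standard consequence of the fibration hypothesis on $p_{n+1}$, since the pair $(I^{k+1},\, I^{k}\times\{0\}\cup\bdry I^{k}\times I)$ is homeomorphic to $(I^{k+1},\,I^{k}\times\{0\})$ via a self-homeomorphism of $I^{k+1}$, reducing the relative lift to the absolute homotopy lifting property.
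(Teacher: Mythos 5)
Your proof is correct and follows essentially the same strategy as the paper: inductively replace the $\alpha_{n}$ by homotopic maps that are strictly compatible, using the homotopy lifting property of each fibration $p_{n+1}$ at the inductive step. Your explicit treatment of the boundary condition (lifting rel $\bdry I^{k}$ via the homeomorphism of pairs $(I^{k+1},\, I^{k}\times\{0\}\cup\bdry I^{k}\times I)\cong(I^{k+1},\, I^{k}\times\{0\})$) is a detail the paper leaves implicit, but it is the same argument.
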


\begin{proof}
  Let $([\alpha_{n}])_{n\ge 0}$ define an element of
  $\lim_{n}\pi_{k}X_{n}$, so that the $\alpha_{n}\colon I^{k} \to
  X_{n}$ for all $n \ge 0$ are maps taking the boundary of $I^{k}$ to
  the basepoint of $X_{n}$ and such that $p_{n+1}\circ\alpha_{n+1}
  \homotopic \alpha_{n}$.  We will inductively define maps
  $\bar\alpha_{n}\colon I^{k} \to X_{n}$ such that $\bar\alpha_{n}
  \homotopic \alpha_{n}$ and $p_{n+1}\circ\bar\alpha_{n+1} =
  \bar\alpha_{n}$; the collection $(\bar\alpha_{n})_{n\ge 0}$ will
  then define a map $\bar\alpha\colon I^{k} \to \lim_{n}X_{n}$ such
  that $P([\bar\alpha]) = ([\alpha_{n}])_{n\ge 0}$.

  We begin the induction by letting $\bar\alpha_{0} = \alpha_{0}$.  If
  $n \ge 0$ and we've defined $\bar\alpha_{n}$, then
  $p_{n+1}\circ\alpha_{n+1} \homotopic \bar\alpha_{n}$, and we can
  lift a homotopy to obtain a homotopy in $X_{n+1}$ that begins at
  $\alpha_{n+1}$ and ends at a map we'll call $\bar\alpha_{n+1}$ such
  that $p_{n+1}\circ\bar\alpha_{n+1} = \bar\alpha_{n}$.
\end{proof}

We turn now to the kernel of $P$.  Let $k \ge 0$, and let
$\alpha\colon I^{k} \to \lim_{n}X_{n}$ represent an element of
$\pi_{k}\lim_{n}X_{n}$ in the kernel of $P$.  For each $n \ge 0$ we
let $\alpha_{n} = P_{n}\circ \alpha\colon I^{k} \to X_{n}$ and we can
choose a nullhomotopy $\beta_{n}\colon I^{k+1} \to X_{n}$ of
$\alpha_{n}$ (see \defref{def:nullhomotopy}).  The map
$\beta_{n}*(p_{n+1}\circ \beta_{n+1}^{-1})\colon I^{k+1} \to X$ takes
the boundary of $I^{k+1}$ to the basepoint of $X_{n}$ and thus defines
an element $[\beta_{n}*(p_{n+1}\circ \beta_{n+1}^{-1})]$ of
$\pi_{k+1}X_{n}$, and these elements for all $n \ge 0$ define an
element $\phi(\alpha)$ of $\limone_{n}\pi_{k+1}X_{n}$.

\begin{prop}
  \label{prop:welldefined}
  The element $\phi(\alpha) \in \limone_{n} \pi_{k+1}X_{n}$ is
  independent of the choice of nullhomotopies $(\beta_{n})_{n \ge
    0}$.
\end{prop}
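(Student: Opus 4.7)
The plan is to exhibit, for any two systems of nullhomotopies $(\beta_n)_{n\ge 0}$ and $(\beta'_n)_{n\ge 0}$ of $(\alpha_n)_{n\ge 0}$, an element $(g_n)_{n\ge 0} \in \prod_{n\ge 0}\pi_{k+1} X_n$ whose action in the sense of \defref{def:lim1} carries the tuple $h' = \bigl([\beta'_n * (p_{n+1}\circ(\beta'_{n+1})^{-1})]\bigr)_{n\ge 0}$ to the tuple $h = \bigl([\beta_n * (p_{n+1}\circ \beta_{n+1}^{-1})]\bigr)_{n\ge 0}$. This will show that $h$ and $h'$ lie in the same orbit and therefore determine the same element of $\limone_n \pi_{k+1} X_n$, which is the conclusion.

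The natural candidate is $g_n = [\beta_n * (\beta'_n)^{-1}]$. First I would check that this is a well-defined element of $\pi_{k+1} X_n$: since $\beta_n$ and $\beta'_n$ are both nullhomotopies of $\alpha_n$, the two halves agree on the seam via $\beta_n(p,1) = \alpha_n(p) = (\beta'_n)^{-1}(p,0)$, and all other faces of $I^{k+1}$ are sent to the basepoint, so $\beta_n * (\beta'_n)^{-1}$ takes $\bdry I^{k+1}$ to the basepoint.

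The key step is then the identity
\[
h_n \;=\; g_n \cdot h'_n \cdot p_{n+1}(g_{n+1}^{-1}) \qquad \text{in } \pi_{k+1} X_n.
\]
Since $p_{n+1}$ commutes with $*$ and with $(-)^{-1}$ at the level of cubes, $p_{n+1}(g_{n+1}^{-1})$ is represented by $(p_{n+1}\circ \beta'_{n+1}) * (p_{n+1}\circ \beta_{n+1}^{-1})$. Writing the right-hand side of the identity as a single concatenation in the last coordinate and applying associativity in $\pi_{k+1} X_n$, the middle pair $(\beta'_n)^{-1} * \beta'_n$ cancels to the constant cube, and then the pair $(p_{n+1}\circ(\beta'_{n+1})^{-1}) * (p_{n+1}\circ\beta'_{n+1})$ cancels similarly, leaving exactly $[\beta_n * (p_{n+1}\circ\beta_{n+1}^{-1})] = h_n$.

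The main obstacle is really just bookkeeping: verifying that every intermediate concatenation of cubes has boundary mapping to the basepoint so that it represents a class in $\pi_{k+1}$, and confirming that the identity above faithfully encodes the action in \defref{def:lim1}. No new geometric input is needed beyond the standard cancellation $[\gamma^{-1} * \gamma] = 1$ and the fact that $p_{n+1}$ is a group homomorphism.
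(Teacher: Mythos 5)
Your proposal is correct and is essentially the paper's own argument: the paper also takes the difference class $g_n = [\bar\beta_n * \beta_n^{-1}]$ of the two nullhomotopies and verifies by the same cancellation of adjacent inverse pairs that the action of $(g_n)_{n\ge 0}$ in the sense of \defref{def:lim1} carries one tuple to the other, so they lie in the same orbit. The only difference is the direction of the comparison (which system of nullhomotopies plays the role of $\beta$ versus $\bar\beta$), which is immaterial.
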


\begin{proof}
  If for each $n \ge 0$ we choose a different nullhomotopy
  $\bar\beta_{n}$, then the map $\bar\beta_{n}* \beta_{n}^{-1}\colon
  I^{k+1} \to X$ takes the boundary of $I^{k+1}$ to the basepoint of
  $X$ and thus defines an element $g_{n} =
  [\bar\beta_{n}*\beta_{n}^{-1}]$ of $\pi_{k+1}X_{n}$, and the element
  \begin{displaymath}
    g_{n}[\beta_{n}*(p_{n+1}\circ\beta_{n+1}^{-1})]
    \bigl((p_{n+1})_{*} g_{n+1}^{-1}\bigr)
  \end{displaymath}
  of $\pi_{k+1}X_{n}$ is represented by the map
  \begin{align*}
    &\mathrel{\phantom{\homotopic}}
    (\bar\beta_{n}*\beta_{n}^{-1})*
    \bigl(\beta_{n}*(p_{n+1}\circ\beta_{n+1}^{-1})\bigr)*
    \bigl(p_{n+1}\circ(\bar\beta_{n+1}*\beta_{n+1}^{-1})\bigr)^{-1}\\
    &\homotopic
    (\bar\beta_{n}*\beta_{n}^{-1})*
    \bigl(\beta_{n}*(p_{n+1}\circ\beta_{n+1}^{-1})\bigr)*
    \bigl((p_{n+1}\circ\bar\beta_{n+1})
    *(p_{n+1}\circ\beta_{n+1}^{-1})\bigr)^{-1}\\
    &\homotopic
    \bar\beta_{n}*\beta_{n}^{-1}*
    \beta_{n} * (p_{n+1}\circ\beta_{n+1}^{-1})*
    (p_{n+1}\circ\beta_{n+1})*(p_{n+1}\circ\bar\beta_{n+1}^{-1})\\
    &\homotopic
    \bar\beta_{n}*(p_{n+1}\circ\bar\beta_{n+1}^{-1})
  \end{align*}
  and so the element of $\limone_{n}\pi_{k+1}X_{n}$ represented by
  $([\beta_{n}*(p_{n+1}\circ\beta_{n+1}^{-1})])_{n\ge 0}$ equals the
  element represented by
  $([\bar\beta_{n}*(p_{n+1}\circ\bar\beta_{n+1}^{-1})])_{n\ge 0}$.
\end{proof}

\begin{prop}
  \label{prop:DefOnHomCls}
  If $\alpha,\bar\alpha\colon I^{k} \to \lim_{n}X_{n}$ represent
  elements of the kernel of $P$ and $[\alpha] = [\bar\alpha]$, then
  $\phi(\alpha) = \phi(\bar\alpha)$, and so $\phi$ defines a function
  $\phitilde\colon \Ker P \to \limone_{n}\pi_{k+1}X_{n}$.
\end{prop}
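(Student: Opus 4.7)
The plan is to use a homotopy in $\lim_{n}X_{n}$ between $\alpha$ and $\bar\alpha$ to construct nullhomotopies of the components $\bar\alpha_{n} = P_{n}\circ\bar\alpha$ out of chosen nullhomotopies of $\alpha_{n} = P_{n}\circ\alpha$, and then appeal to \propref{prop:welldefined} to finish.

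First I would fix nullhomotopies $\beta_{n}\colon I^{k+1}\to X_{n}$ of $\alpha_{n}$, so that $\phi(\alpha)$ is computed from $(\beta_{n})_{n\ge 0}$.  Since $[\alpha] = [\bar\alpha]$ in $\pi_{k}\lim_{n}X_{n}$, I can choose a map $H\colon I^{k+1}\to \lim_{n}X_{n}$ with $H(p,0) = \alpha(p)$, $H(p,1) = \bar\alpha(p)$, and $H(p,t) = *$ for all $p \in \bdry I^{k}$.  Set $H_{n} = P_{n}\circ H\colon I^{k+1}\to X_{n}$; because $H$ lands in the limit, the naturality $p_{n+1}\circ H_{n+1} = H_{n}$ is automatic.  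Composing in the last coordinate, I would define $\bar\beta_{n} = \beta_{n}*H_{n}$, and checking the three conditions of \defref{def:nullhomotopy} shows that each $\bar\beta_{n}$ is a nullhomotopy of $\bar\alpha_{n}$.

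The heart of the argument is then the computation
\begin{displaymath}
  \bar\beta_{n}*(p_{n+1}\circ\bar\beta_{n+1}^{-1})
  \homotopic
  (\beta_{n}*H_{n})*\bigl(H_{n}^{-1}*(p_{n+1}\circ\beta_{n+1}^{-1})\bigr)
  \homotopic
  \beta_{n}*(p_{n+1}\circ\beta_{n+1}^{-1})
\end{displaymath}
rel $\bdry I^{k+1}$.  The first step combines the identity $(\beta_{n+1}*H_{n+1})^{-1} \homotopic H_{n+1}^{-1}*\beta_{n+1}^{-1}$ with $p_{n+1}\circ H_{n+1}^{-1} = H_{n}^{-1}$; the second is the standard cancellation of $H_{n}*H_{n}^{-1}$ together with reassociation, all carried out in the last coordinate.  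Hence the representing tuples in $\prod_{n\ge 0}\pi_{k+1}X_{n}$ for $\phi(\alpha)$ and for the version of $\phi(\bar\alpha)$ computed from $(\bar\beta_{n})_{n\ge 0}$ agree coordinatewise, and therefore define the same orbit in $\limone_{n}\pi_{k+1}X_{n}$.

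By \propref{prop:welldefined} this orbit is $\phi(\bar\alpha)$, independent of the choice of nullhomotopies, so $\phi(\bar\alpha) = \phi(\alpha)$ and $\phi$ descends to $\phitilde\colon \Ker P \to \limone_{n}\pi_{k+1}X_{n}$.  I do not anticipate a serious obstacle; the only care required is to verify that $\bar\beta_{n}$ satisfies the conditions of \defref{def:nullhomotopy} and that the usual path-homotopy identities transport to the last-coordinate conventions of \defref{def:mapcubes}.
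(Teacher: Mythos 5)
Your proposal is correct and follows essentially the same route as the paper's proof: both choose a homotopy in $\lim_{n}X_{n}$ from $\alpha$ to $\bar\alpha$ (your $H$ is the paper's $\gamma$), form the nullhomotopies $\bar\beta_{n} = \beta_{n}*H_{n}$ of $\bar\alpha_{n}$, and cancel $H_{n}*H_{n}^{-1}$ to identify the level-$n$ representatives, invoking \propref{prop:welldefined} to conclude. No gaps.
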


\begin{proof}
  Since $\alpha$ and $\bar\alpha$ represent the same element of
  $\pi_{k}\lim_{n}X_{n}$, there is a homotopy $\gamma\colon I^{k+1}
  \to \lim_{n}X_{n}$ such that
  \begin{displaymath}
    \gamma(p,t) =
    \begin{cases}
      \alpha(p)& \text{if $t = 0$}\\
      \bar\alpha(p)& \text{if $t = 1$, and}\\
      *& \text{if $p \in \bdry I^{k}$}\Period
    \end{cases}
  \end{displaymath}
  For every $n \ge 0$ we let $\alpha_{n} = P_{n}\circ\alpha$,
  $\bar\alpha_{n} = P_{n}\circ\bar\alpha$, and $\gamma_{n} =
  P_{n}\circ\gamma$, and we choose a nullhomotopy $\beta_{n}$ of
  $\alpha_{n}$.  For every $n \ge 0$ we can then let $\bar\beta_{n} =
  \beta_{n}*\gamma_{n}$, and $\bar\beta_{n}$ is then a nullhomotopy of
  $\bar\alpha_{n}$, and we have
  \begin{align*}
    \bar\beta_{n}*(p_{n+1}\circ\bar\beta_{n+1}^{-1})
    &= (\beta_{n}*\gamma_{n})*
    \bigl(p_{n+1}\circ(\gamma_{n+1}^{-1}*\beta_{n+1}^{-1})\bigr)\\
    &= (\beta_{n}*\gamma_{n})*
    \bigl((p_{n+1}\circ\gamma_{n+1}^{-1})
    *(p_{n+1}\circ\beta_{n+1}^{-1})\bigr)\\
    &= (\beta_{n}*\gamma_{n})*
    \bigl(\gamma_{n}^{-1}*(p_{n+1}\circ\beta_{n+1}^{-1})\bigr)\\
    &\homotopic \beta_{n}*(p_{n+1}\circ\beta_{n+1}^{-1})
  \end{align*}
  and so $\phi(\alpha) = \phi(\bar\alpha)$.
\end{proof}

\begin{prop}
  \label{prop:homomor}
  If $k \ge 1$, then $\phitilde\colon \Ker P \to
  \limone_{n}\pi_{k+1}X_{n}$ is a homomorphism.
\end{prop}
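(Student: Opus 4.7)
The plan is to reduce the homomorphism property to a cube-rearrangement identity using the alternate composition $\starhat$. Given $\alpha,\bar\alpha\colon I^k \to \lim_n X_n$ representing elements of $\Ker P$, I would represent their product $[\alpha][\bar\alpha]$ in $\pi_k\lim_n X_n$ by the map $\alpha*\bar\alpha$ (composing in the last coordinate of $I^k$). Setting $\alpha_n = P_n\circ\alpha$, $\bar\alpha_n = P_n\circ\bar\alpha$, and choosing nullhomotopies $\beta_n$ of $\alpha_n$ and $\bar\beta_n$ of $\bar\alpha_n$, I would use as a nullhomotopy of $(\alpha*\bar\alpha)_n = \alpha_n*\bar\alpha_n$ the map $\beta_n\starhat\bar\beta_n$ of \defref{def:altcomp}, composed in the next-to-last (that is, $k$-th) coordinate of $I^{k+1}$. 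The boundary conditions on $\beta_n$ and $\bar\beta_n$ make the pairing along $t_k = \tfrac12$ match up (both sides equal $*$), and a direct check of the values at $t_{k+1}=0,1$ and on $\partial I^k$ confirms that $\beta_n\starhat\bar\beta_n$ is indeed a nullhomotopy of $\alpha_n*\bar\alpha_n$.

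By \propref{prop:welldefined} I am free to compute $\widetilde\phi([\alpha][\bar\alpha])$ using these particular nullhomotopies. The representative in $\pi_{k+1}X_n$ is then
\begin{displaymath}
  (\beta_n\starhat\bar\beta_n)*\bigl(p_{n+1}\circ(\beta_{n+1}\starhat\bar\beta_{n+1})^{-1}\bigr).
\end{displaymath}
A direct computation from the definitions gives $(\beta_{n+1}\starhat\bar\beta_{n+1})^{-1} = \beta_{n+1}^{-1}\starhat\bar\beta_{n+1}^{-1}$ (the last-coordinate inverse and the next-to-last-coordinate composition commute), and $p_{n+1}$ commutes with $\starhat$ since it acts pointwise.

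At this point the key step is the standard interchange (Eckmann--Hilton) identity
\begin{displaymath}
  (A\starhat B)*(C\starhat D) \homotopic (A*C)\starhat(B*D)
\end{displaymath}
relative to the boundary of $I^{k+1}$, valid whenever both sides are defined. Applying this with $A=\beta_n$, $B=\bar\beta_n$, $C=p_{n+1}\circ\beta_{n+1}^{-1}$, $D=p_{n+1}\circ\bar\beta_{n+1}^{-1}$ yields
\begin{displaymath}
  (\beta_n\starhat\bar\beta_n)*\bigl(p_{n+1}\circ(\beta_{n+1}\starhat\bar\beta_{n+1})^{-1}\bigr)
  \homotopic
  \bigl(\beta_n*(p_{n+1}\circ\beta_{n+1}^{-1})\bigr)\starhat
  \bigl(\bar\beta_n*(p_{n+1}\circ\bar\beta_{n+1}^{-1})\bigr).
\end{displaymath}
Since $k\ge 1$ the group $\pi_{k+1}X_n$ is abelian, so as noted after \defref{def:altcomp} the class of the right-hand side is the product $g_n\bar g_n$ of the classes of the two factors. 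Hence the $n$-th coordinate of $\widetilde\phi([\alpha][\bar\alpha])$ equals the product of the $n$-th coordinates of $\widetilde\phi([\alpha])$ and $\widetilde\phi([\bar\alpha])$ in $\pi_{k+1}X_n$.

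Finally, since the tower $\pi_{k+1}X_n$ consists of abelian groups, \defref{def:lim1} makes $\limone_n\pi_{k+1}X_n$ an abelian group whose operation is induced by coordinatewise multiplication of representatives, so the coordinatewise identity just established gives $\widetilde\phi([\alpha][\bar\alpha]) = \widetilde\phi([\alpha])\widetilde\phi([\bar\alpha])$. The main obstacle is the interchange identity; I would prove it by writing down an explicit affine homotopy rescaling the $t_k$ and $t_{k+1}$ coordinates, being careful that it maps $\partial I^{k+1}$ to the basepoint throughout so that it yields a homotopy rel boundary.
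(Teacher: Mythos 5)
Your proposal is correct and follows essentially the same route as the paper: use $\beta_n\starhat\bar\beta_n$ as a nullhomotopy of $\alpha_n*\bar\alpha_n$ and then apply the interchange of the $*$ and $\starhat$ compositions. The only (harmless) difference is that the paper observes by writing out the formula that $(\beta_n\starhat\bar\beta_n)*\bigl(p_{n+1}\circ(\beta_{n+1}\starhat\bar\beta_{n+1})^{-1}\bigr)$ is literally \emph{equal} to $\bigl(\beta_n*(p_{n+1}\circ\beta_{n+1}^{-1})\bigr)\starhat\bigl(\bar\beta_n*(p_{n+1}\circ\bar\beta_{n+1}^{-1})\bigr)$, since the two compositions act on different coordinates, so no affine interchange homotopy is actually needed.
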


\begin{proof}
  Let $\alpha,\bar\alpha\colon I^{k} \to \lim_{n}X_{n}$ represent
  elements of $\pi_{k}\lim_{n}X_{n}$ in the kernel of $P$.  For every
  $n \ge 0$, let $\alpha_{n} = P_{n}\circ \alpha$, $\bar\alpha_{n} =
  P_{n}\circ \bar\alpha$, and choose nullhomotopies $\beta_{n}$ of
  $\alpha_{n}$ and $\bar\beta_{n}$ of $\bar\alpha_{n}$.  If
  $\beta_{n}\starhat \bar\beta_{n}\colon I^{k+1} \to X_{n}$ is defined
  as in \defref{def:altcomp}, then $\beta_{n}\starhat\bar\beta_{n}$ is
  a nullhomotopy of $\alpha_{n}*\bar\alpha_{n}$.  Thus,
  $\phitilde([\alpha][\bar\alpha])$ in level $n$ is represented by
  \begin{displaymath}
    (\beta_{n}\starhat\bar\beta_{n}) *
    \bigl(p_{n+1}\circ(\beta_{n+1}\starhat\bar\beta_{n+1})^{-1}\bigr)
    = (\beta_{n}\starhat\bar\beta_{n}) *
    \bigl((p_{n+1}\circ\beta_{n+1}) \starhat
    (p_{n+1}\circ\bar\beta_{n+1})\bigr)^{-1}
  \end{displaymath}
  which on the point $(t_{1},t_{2},\ldots,t_{k+1})$ takes the value
  \begin{alignat*}{2}
    \beta(t_{1},t_{2},\ldots,t_{k-1},2t_{k},2t_{k+1})&\quad&
    \text {if $0\le t_{k}\le\tfrac{1}{2}$ and $0\le
      t_{k+1}\le\tfrac{1}{2}$}\\ 
    \bar\beta(t_{1},t_{2},\ldots,t_{k-1},2t_{k}-1,2t_{k+1})&&
    \text {if $\tfrac{1}{2}\le t_{k}\le 1$ and $0\le
      t_{k+1}\le\tfrac{1}{2}$}\\ 
    (p_{n+1}\circ\beta)(t_{1},t_{2},\ldots,t_{k-1},2t_{k},1-(2t_{k+1}))&&
    \text {if $0\le t_{k}\le\tfrac{1}{2}$ and $\tfrac{1}{2}\le
      t_{k+1}\le1$}\\ 
    (p_{n+1}\circ\bar\beta)(t_{1},t_{2},\ldots,t_{k-1},2t_{k}-1,1-(2t_{k+1}))
    &&
    \text {if $\tfrac{1}{2}\le t_{k}\le1$ and $\tfrac{1}{2}\le
      t_{k+1}\le1$}
    \Period
  \end{alignat*}
  This is also the definition of
  \begin{displaymath}
    \bigl(\beta_{n}*(p_{n+1}\circ\beta_{n+1}^{-1})\bigr) \starhat
    \bigl(\bar\beta_{n}*(p_{n+1}\circ\bar\beta_{n+1}^{-1})\bigr)
  \end{displaymath}
  which is the level $n$ representative of
  $\phi(\alpha)\starhat\phi(\bar\alpha)$.  Since multiplication of
  elements of a homotopy group can be defined using any of the
  coordinates of the cube, we have
  $\phitilde\bigl([\alpha][\bar\alpha]\bigr) =
  \phitilde\bigl([\alpha]\bigr) \phitilde\bigl([\bar\alpha]\bigr)$.
\end{proof}

\begin{prop}
  \label{prop:surjective}
  The function $\phitilde\colon \Ker P \to \limone_{n}\pi_{k+1}X_{n}$
  is surjective for all $k \ge 0$.
\end{prop}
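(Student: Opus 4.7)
The plan is to build, from a given element $([\gamma_n])_{n\ge 0}$ of $\limone_n \pi_{k+1}X_n$ represented by maps $\gamma_n\colon I^{k+1}\to X_n$ sending $\bdry I^{k+1}$ to the basepoint, a map $\alpha\colon I^k\to \lim_n X_n$ whose projections $\alpha_n=P_n\circ\alpha$ are all nullhomotopic (so $[\alpha]\in\Ker P$) and admit nullhomotopies $\beta_n$ witnessing $\phitilde([\alpha])=([\gamma_n])_{n\ge 0}$. The idea is to produce the $\alpha_n$'s and their nullhomotopies $\beta_n$ \emph{simultaneously}, by inductively lifting through the fibrations $p_{n+1}$.

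I would proceed as follows. Set $\alpha_0=*$ and $\beta_0=\gamma_0$; since $\gamma_0$ takes $\bdry I^{k+1}$ to the basepoint, it is a (rather trivial) nullhomotopy of $\alpha_0$. Now suppose $\alpha_n$ and $\beta_n$ have been constructed with $\beta_n$ a nullhomotopy of $\alpha_n$. Form
\begin{displaymath}
  \delta_n \;=\; \gamma_n^{-1} * \beta_n \colon I^{k+1}\to X_n\Comma
\end{displaymath}
which is again a nullhomotopy of $\alpha_n$ (its top face is the top face of $\beta_n$, namely $\alpha_n$, and it is $*$ on the rest of $\bdry I^{k+1}$). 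Let $F\subset I^{k+1}$ be the subspace $I^k\times\{0\}\cup\bdry I^k\times I$; the inclusion $F\hookrightarrow I^{k+1}$ admits a retraction, so it is a cofibration. The constant map $F\to X_{n+1}$ at the basepoint lifts the restriction $\delta_n|_F=*$, so the homotopy lifting property for the fibration $p_{n+1}$ yields a map $\beta_{n+1}\colon I^{k+1}\to X_{n+1}$ with $p_{n+1}\circ\beta_{n+1}=\delta_n$ and $\beta_{n+1}|_F=*$. Defining $\alpha_{n+1}(p)=\beta_{n+1}(p,1)$, we get $p_{n+1}\alpha_{n+1}=\delta_n(\cdot,1)=\beta_n(\cdot,1)=\alpha_n$ strictly, so $\beta_{n+1}$ is a nullhomotopy of $\alpha_{n+1}$ and the $(\alpha_n)_{n\ge 0}$ assemble into a map $\alpha\colon I^k\to\lim_n X_n$ sending $\bdry I^k$ to the basepoint.

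Once the construction is in hand, verifying the desired properties is routine. Each $\alpha_n$ is nullhomotopic via $\beta_n$, so $P([\alpha])=([\alpha_n])=0$ in $\lim_n\pi_k X_n$, hence $[\alpha]\in\Ker P$. Using the nullhomotopies $\beta_n$ to compute $\phitilde([\alpha])$, the level $n$ representative is
\begin{displaymath}
  \beta_n * (p_{n+1}\circ\beta_{n+1}^{-1})
  \;=\; \beta_n * (\gamma_n^{-1}*\beta_n)^{-1}
  \;=\; \beta_n * \beta_n^{-1} * \gamma_n
  \;\homotopic\; \gamma_n\Comma
\end{displaymath}
so $\phitilde([\alpha])=([\gamma_n])_{n\ge 0}$ as required.

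The main obstacle is the inductive lifting step: we need $p_{n+1}\circ\beta_{n+1}=\delta_n$ \emph{on the nose} (so the $\alpha_n$ fit together into a point of $\lim_n X_n$) while simultaneously enforcing $\beta_{n+1}|_F=*$ (so that $\beta_{n+1}$ really is a nullhomotopy in the sense of \defref{def:nullhomotopy}). This is precisely the relative lifting problem for the cofibration $F\hookrightarrow I^{k+1}$ against the fibration $p_{n+1}$, and the existence of a retraction $I^{k+1}\to F$ reduces it to the ordinary homotopy lifting property, which is the only non-formal input needed.
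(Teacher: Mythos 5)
Your construction is correct and is essentially the paper's own proof: you build the $\alpha_n$ and their nullhomotopies $\beta_n$ simultaneously by lifting $\gamma_n^{-1}*\beta_n$ through $p_{n+1}$ rel the subspace $(I^{k}\times\{0\})\cup(\bdry I^{k}\times I)$, and then compute $\beta_{n}*(p_{n+1}\circ\beta_{n+1}^{-1})\homotopic\gamma_{n}$ exactly as the paper does. The only (harmless) difference is your choice $\beta_{0}=\gamma_{0}$ where the paper takes $\beta_{0}$ constant; both are valid nullhomotopies of $\alpha_{0}=*$ and yield the same element of $\limone_{n}\pi_{k+1}X_{n}$.
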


\begin{proof}
  Fix a value of $k \ge 0$; every element of
  $\limone_{n}\pi_{k+1}X_{n}$ can be represented by
  $\bigl([\gamma_{n}]\bigr)_{n \ge 0} \in \prod_{n\ge 0} \pi_{k+1}
  X_{n}$ where $\gamma_{n}\colon I^{k+1} \to X_{n}$ for every $n \ge
  0$ is a map taking the boundary of $I^{k+1}$ to the basepoint.  We
  will inductively define $\alpha_{n}\colon I^{k} \to X_{n}$ taking
  the boundary of $I^{k}$ to the basepoint and a nullhomotopy
  $\beta_{n}\colon I^{k+1} \to X_{n}$ of $\alpha_{n}$.  We will
  arrange it so that $p_{n+1}\circ \alpha_{n+1} = \alpha_{n}$ for all
  $n \ge 0$, so that the $\bigl(\alpha_{n}\bigr)_{n \ge 0}$ will
  define a map $\alpha\colon I^{k} \to \lim_{n}X_{n}$ taking the
  boundary of $I^{k}$ to the basepoint, and such that
  $\phitilde\bigl([\alpha]\bigr)$ is the element of
  $\limone_{n}\pi_{k+1}X_{n}$ represented by
  $\bigl([\gamma_{n}]\bigr)_{n\ge 0}$.  We begin by letting
  $\alpha_{0}\colon I^{k} \to X_{0}$ and $\beta_{0}\colon I^{k+1} \to
  X_{0}$ both be constant maps to the basepoint.

  For the inductive step, let $n \ge 0$ and assume that we've defined
  $\alpha_{n}$ and $\beta_{n}$.  The map $\gamma_{n}^{-1}*
  \beta_{n}\colon I^{k+1} \to X_{n}$ is a nullhomotopy of
  $\alpha_{n}$, and its restriction to $(I^{k}\times \{0\}) \cup
  (\bdry I^{k}\times I)$ is the constant map to the basepoint.  Thus,
  we can lift that restriction to the constant map to the basepoint of
  $X_{n+1}$.  Since there is a homeomorphism of $I^{k+1}$ to itself
  that takes $(I^{k}\times \{0\}) \cup (\bdry I^{k}\times I)$ onto
  $I^{k}\times \{0\}$, we can extend that lift to a map
  $\beta_{n+1}\colon I^{k+1} \to X_{n+1}$.  We define
  $\alpha_{n+1}(t_{1},t_{2},\ldots,t_{k}) = \beta_{n+1}(t_{1},
  t_{2},\ldots, t_{k}, 1)$, and $\beta_{n+1}$ is a nullhomotopy of
  $\alpha_{n+1}$.

  We now have $p_{n+1}\circ\alpha_{n+1} = \alpha_{n}$ for all $n \ge
  0$, and so the maps $(\alpha_{n})_{n\ge 0}$ define $\alpha\colon
  I^{k} \to \lim_{n}X_{n}$, which represents an element of
  $\pi_{k}\lim_{n}X_{n}$ in the kernel of $P$.  For every $n \ge 0$ we
  have $p_{n+1}\circ\beta_{n+1} = \gamma_{n}^{-1}* \beta_{n}$, and so
  \begin{align*}
    \beta_{n}*(p_{n+1}\circ\beta_{n+1}^{-1})
    &= \beta_{n} * (\beta_{n}^{-1} * \gamma_{n})\\
    &\homotopic \gamma_{n} \Comma
  \end{align*}
  and so $\phitilde\bigl([\alpha]\bigr)$ is represented by
  $\bigl([\gamma_{n}]\bigr)_{n\ge 0}$.  Thus, $\phitilde$ is
  surjective.
\end{proof}

\begin{prop}
  \label{prop:injective}
  The function $\phitilde\colon \Ker P \to \limone_{n}\pi_{k+1}X_{n}$
  is injective for all $k \ge 0$.
\end{prop}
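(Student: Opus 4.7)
The plan is to show that if $[\alpha]\in\Ker P$ has $\phitilde([\alpha])$ equal to the basepoint of $\limone_n\pi_{k+1}X_n$, then $[\alpha]=0$ in $\pi_k\lim_nX_n$. For $k\ge 1$ this implies injectivity since $\phitilde$ is a group homomorphism (\propref{prop:homomor}); for $k=0$, the analogous argument with nullhomotopies of a single $\alpha_n$ replaced by homotopies $\bar\beta_n^{-1}*\beta'_n$ from $\alpha_n$ to a second representative $\alpha'_n$ handles pointed-set injectivity. Unpacking the hypothesis via \defref{def:lim1}, there is $([\eta_n])_{n\ge 0}\in\prod_n\pi_{k+1}X_n$ whose left action on $\bigl([\beta_n*(p_{n+1}\circ\beta_{n+1}^{-1})]\bigr)_{n\ge 0}$ gives the identity tuple, where the $\beta_n$ are the chosen nullhomotopies of $\alpha_n=P_n\circ\alpha$. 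Setting $\bar\beta_n=\eta_n*\beta_n$ produces new nullhomotopies of $\alpha_n$ with $[\bar\beta_n*\beta_n^{-1}]=[\eta_n]$, and the computation from the proof of \propref{prop:welldefined} then forces $[\bar\beta_n*(p_{n+1}\circ\bar\beta_{n+1}^{-1})]=1$ in $\pi_{k+1}X_n$ for every $n$; since $\bar\beta_n$ and $p_{n+1}\circ\bar\beta_{n+1}$ already agree on $\bdry I^{k+1}$, this yields $\bar\beta_n\homotopic p_{n+1}\circ\bar\beta_{n+1}$ rel $\bdry I^{k+1}$.

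I next inductively construct $\tilde\beta_n\colon I^{k+1}\to X_n$ with $\tilde\beta_n\homotopic\bar\beta_n$ rel $\bdry I^{k+1}$ and $p_{n+1}\circ\tilde\beta_{n+1}=\tilde\beta_n$. Starting with $\tilde\beta_0=\bar\beta_0$, and given $\tilde\beta_n$, concatenate the homotopies $p_{n+1}\circ\bar\beta_{n+1}\homotopic\bar\beta_n\homotopic\tilde\beta_n$ into a map $H\colon I^{k+1}\times I\to X_n$ that is constant in the $I$-direction on $\bdry I^{k+1}$. Lift $H$ to $\tilde H\colon I^{k+1}\times I\to X_{n+1}$ starting at $\bar\beta_{n+1}$ and still constant in the $I$-direction on $\bdry I^{k+1}$, and define $\tilde\beta_{n+1}=\tilde H(\cdot,1)$. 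The family $(\tilde\beta_n)_{n\ge 0}$ then assembles into a nullhomotopy $\tilde\beta\colon I^{k+1}\to\lim_nX_n$ of $\alpha$, so $[\alpha]=0$.

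The main obstacle is arranging the lift $\tilde H$ so that $\tilde\beta_{n+1}$ remains a nullhomotopy of $\alpha_{n+1}$: $\tilde H$ must stay at the basepoint on $\bigl((I^k\times\{0\})\cup(\bdry I^k\times I)\bigr)\times I$ and equal $\alpha_{n+1}$ on $(I^k\times\{1\})\times I$. This is done by first defining the lift on the cofibrant subspace $I^{k+1}\times\{0\}\cup\bdry I^{k+1}\times I$ --- using $\bar\beta_{n+1}$ on $I^{k+1}\times\{0\}$ and the $I$-constant extension of $\bar\beta_{n+1}|_{\bdry I^{k+1}}$ on $\bdry I^{k+1}\times I$ --- and then invoking the homotopy lifting property of $p_{n+1}$, exploiting that $I^{k+1}\times\{0\}\cup\bdry I^{k+1}\times I$ is a retract of $I^{k+1}\times I$.
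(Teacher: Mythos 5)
Your argument is correct, and its technical heart is the same as the paper's: rectify a family of level\nobreakdash-wise maps that are compatible only up to homotopy rel $\bdry I^{k+1}$ into a strictly compatible family, by inductively lifting a concatenated homotopy defined first on $I^{k+1}\times\{0\}\cup\bdry I^{k+1}\times I$ and then extended via the standard homeomorphism of pairs. The packaging differs. The paper treats all $k\ge 0$ uniformly: it takes two representatives $\alpha,\bar\alpha$ with $\phi(\alpha)=\phi(\bar\alpha)$, converts the $\limone$ relation into homotopies $\bar\beta_{n}^{-1}*\gamma_{n}$ (with $\gamma_{n}=g_{n}*\beta_{n}$) from $\bar\alpha_{n}$ to $\alpha_{n}$ that are compatible up to homotopy, and rectifies those; in particular it never invokes \propref{prop:homomor}. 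You instead reduce to triviality of the kernel of $\phitilde$, which is cleaner for $k\ge 1$ but makes the proof depend on \propref{prop:homomor} and forces a separate treatment of $k=0$, where $\phitilde$ is only a map of pointed sets; your one-sentence indication of that case is in effect the paper's two-element argument, and while the indicated modification does go through, it is the part of your write-up that would need to be expanded to be complete. One step you should justify explicitly: the passage from $[\bar\beta_{n}*(p_{n+1}\circ\bar\beta_{n+1}^{-1})]=1$ to $\bar\beta_{n}\homotopic p_{n+1}\circ\bar\beta_{n+1}$ rel $\bdry I^{k+1}$ uses the standard fact that two maps of $I^{k+1}$ agreeing on the boundary are homotopic rel boundary if and only if their difference class vanishes (i.e., groupoid cancellation in the space of maps $(I^{k},\bdry I^{k})\to(X_{n},*)$); this is true but is exactly the kind of manipulation the paper carries out by hand.
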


\begin{proof}
  Let $\alpha,\bar\alpha\colon I^{k} \to \lim_{n}X_{n}$ represent
  elements of the kernel of $P$ such that $\phi(\alpha) =
  \phi(\bar\alpha)$ in $\limone_{n}\pi_{k+1}X_{n}$.  For each $n \ge
  0$ let $\alpha_{n} = P_{n}\circ\alpha$, let $\bar\alpha_{n} =
  P_{n}\circ\bar\alpha$, and choose nullhomotopies $\beta_{n}$ of
  $\alpha_{n}$ and $\bar\beta_{n}$ of $\bar\alpha_{n}$.  Since
  $\phi(\alpha) = \phi(\bar\alpha)$ in $\limone_{n}\pi_{k+1}X_{n}$,
  there exists an element $([g_{n}])_{n\ge 0}$ of $\prod_{n\ge 0}
  \pi_{k+1}X_{n}$ (where each map $g_{n}\colon I^{k+1} \to X_{n}$
  takes the boundary of $I^{k+1}$ to the basepoint) so that
  \begin{displaymath}
    g_{n}* \bigl(\beta_{n}*(p_{n+1}\circ\beta_{n+1}^{-1})\bigr) *
    (p_{n+1}\circ g_{n+1}^{-1})
    \homotopic
    \bar\beta_{n} * (p_{n+1}\circ\bar\beta_{n+1}^{-1})
  \end{displaymath}
  and so
  \begin{displaymath}
    g_{n}* \beta_{n}*
    \bigl(p_{n+1}\circ(\beta_{n+1}^{-1} * g_{n+1}^{-1})\bigr)
    \homotopic
    \bar\beta_{n} * (p_{n+1}\circ\bar\beta_{n+1}^{-1})
  \end{displaymath}
  for all $n \ge 0$.  For every $n \ge 0$ let $\gamma_{n} =
  g_{n}*\beta_{n}$; we then have
  \begin{displaymath}
    \gamma_{n}*(p_{n+1}\circ\gamma_{n+1}^{-1}) \homotopic
    \bar\beta_{n}*(p_{n+1}\circ\bar\beta_{n+1}^{-1})
  \end{displaymath}
  and so
  \begin{displaymath}
    \bar\beta_{n}^{-1}*\gamma_{n} \homotopic
    (p_{n+1}\circ\bar\beta_{n+1}^{-1}) * (p_{n+1}\circ\gamma_{n+1})
    = p_{n+1}\circ(\bar\beta_{n+1}^{-1} * \gamma_{n+1}) \Period
  \end{displaymath}
  Thus, for every $n \ge 0$, we have
  \begin{itemize}
  \item $\bar\beta_{n}^{-1}* \gamma_{n}$ is a homotopy in $X_{n}$ from
    $\bar\alpha_{n}$ to $\alpha_{n}$, and
  \item $p_{n+1}\circ(\bar\beta_{n+1}^{-1}*\gamma_{n+1}) \homotopic
    \bar\beta_{n}^{-1}*\gamma_{n}$.
  \end{itemize}
  We will now inductively replace each homotopy
  $\bar\beta_{n}^{-1}*\gamma_{n}$ with a homotopic homotopy
  $\delta_{n}$ so that $p_{n+1}\circ\delta_{n+1} = \delta_{n}$; the
  $(\delta_{n})_{n\ge 0}$ will then define a homotopy in
  $\lim_{n}X_{n}$ from $\bar \alpha$ to $\alpha$.  We begin by letting
  $\delta_{0} = \bar\beta_{0}^{-1} * \gamma_{0}$.

  If $n \ge 0$ and we've defined $\delta_{n}$, then
  $p_{n+1}\circ(\bar\beta_{n+1}^{-1}*\gamma_{n+1})$ is homotopic to
  $\delta_{n}$, i.e., there is a homotopy $H\colon I^{k+2} \to X_{n}$
  such that
  \begin{align*}
    H(p,t) &= \bigl(p_{n+1}\circ (\bar\beta_{n+1}^{-1} *
    \gamma_{n+1})\bigr) (p)&&
    \text{for $(p,t) \in (I^{k+1}\times \{0\}) \cup
      (\bdry I^{k+1}\times I)$}\\
    H(p,1) &= \delta_{n}(p)&&\text{for $p \in I^{k+1}$}
  \end{align*}
  If we define $H'\colon (I^{k+1}\times \{0\}) \cup (\bdry I^{k+1}
  \times I) \to X_{n+1}$ by $H'(p,t) = (\bar\beta_{n+1}^{-1} *
  \gamma_{n+1})(p)$, then $H'$ is a lift of the restriction of $H$ to
  $(I^{k+1}\times \{0\}) \cup (\bdry I^{k+1}\times I)$.  There is a
  homeomorphism of $I^{k+2}$ to itself that takes $(I^{k+1}\times
  \{0\}) \cup (\bdry I^{k+1}\times I)$ onto $I^{k+1} \times \{0\}$,
  and so we can extend $H'$ to a lift $H'\colon I^{k+2} \to X_{n+1}$
  of $H$.  We define $\delta_{n+1}\colon I^{k+1} \to X_{n+1}$ by
  letting $\delta_{n+1}(p) = H'(p,1)$, and $\delta_{n+1}$ is a
  homotopy from $\bar\alpha_{n+1}$ to $\alpha_{n+1}$, homotopic to
  $\bar\beta_{n+1}^{-1}* \gamma_{n+1}$, such that $p_{n+1}\circ
  \delta_{n+1} = \delta_{n}$.  This completes the induction, and so
  the $(\delta_{n})_{n\ge 0}$ define a homotopy $\delta$ in
  $\lim_{n}X_{n}$ from $\bar\alpha$ to $\alpha$, and so $\phitilde$ is
  injective.
\end{proof}

\begin{prop}
  \label{prop:most}
  For every $k \ge 0$ the natural map $P\colon \pi_{k}\lim_{n} X_{n}
  \to \lim_{n}\pi_{k} X_{n}$ of \thmref{thm:HomGrps} is surjective and
  its kernel is naturally isomorphic to $\limone_{n}\pi_{k+1}X_{n}$.
\end{prop}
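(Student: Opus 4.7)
The plan is simply to assemble the results already proved in \secref{sec:prfmain}. Surjectivity of $P$ is exactly \propref{prop:surj}, so nothing remains to be said there. For the kernel, I would observe that \propref{prop:welldefined} and \propref{prop:DefOnHomCls} together produce a well-defined function $\phitilde\colon \Ker P \to \limone_{n}\pi_{k+1}X_{n}$, while \propref{prop:surjective} and \propref{prop:injective} show that this function is a bijection of pointed sets for every $k \ge 0$. For $k \ge 1$, \propref{prop:homomor} upgrades this bijection to a group isomorphism.

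The one thing not addressed by the previous propositions is naturality, so the bulk of the writeup would be to spell that out. Given a map of towers of fibrations $(f_{n})\colon (X_{n}) \to (Y_{n})$, I would verify that the square
\begin{displaymath}
  \xymatrix{
    {\Ker P_{X}} \ar[r]^-{\phitilde_{X}} \ar[d]
    & {\limone_{n}\pi_{k+1}X_{n}} \ar[d]\\
    {\Ker P_{Y}} \ar[r]^-{\phitilde_{Y}}
    & {\limone_{n}\pi_{k+1}Y_{n}}
  }
\end{displaymath}
commutes, which is immediate from the construction: if $\alpha\colon I^{k} \to \lim_{n}X_{n}$ is in $\Ker P_{X}$ with chosen nullhomotopies $\beta_{n}$ of $\alpha_{n} = P_{n}\circ\alpha$, then $f_{n}\circ\beta_{n}$ is a nullhomotopy of $f_{n}\circ\alpha_{n}$, and $f_{n}$ carries the representative $\beta_{n}*(p_{n+1}\circ\beta_{n+1}^{-1})$ to $(f_{n}\circ\beta_{n})*(q_{n+1}\circ f_{n+1}\circ\beta_{n+1}^{-1})$, which is exactly the representative produced by the construction applied to the composite $f\circ\alpha$ using the nullhomotopies $f_{n}\circ\beta_{n}$. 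By \propref{prop:welldefined}, the class in $\limone_{n}\pi_{k+1}Y_{n}$ is independent of this choice, so the square commutes.

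I do not expect any genuine obstacles here, since every substantive piece has been done. The only thing requiring attention is to state the naturality check cleanly, since the construction of $\phitilde$ involves choices (of nullhomotopies) and one has to say that those choices can be transported along $f$ — but this is precisely the content of \propref{prop:welldefined}. Once the diagram above is noted, combining it with the assembly of the earlier propositions gives the proposition in one short paragraph.
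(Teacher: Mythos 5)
Your proposal is correct and follows essentially the same route as the paper, which likewise just assembles Propositions \ref{prop:surj}, \ref{prop:welldefined}, \ref{prop:DefOnHomCls}, \ref{prop:homomor}, \ref{prop:surjective}, and \ref{prop:injective}. The explicit naturality check you include (transporting nullhomotopies along a map of towers and invoking \propref{prop:welldefined}) is a correct elaboration of a point the paper leaves implicit, not a different argument.
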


\begin{proof}
  \propref{prop:surj} shows that $P$ is surjective,
  \propref{prop:welldefined} and \propref{prop:DefOnHomCls} define a
  natural map from the kernel of $P$ to $\limone_{n}\pi_{k+1}X_{n}$,
  \propref{prop:homomor} shows that if $k \ge 1$ then that natural map
  is a homomorphism, and \propref{prop:surjective} and
  \propref{prop:injective} show that it is an isomorphism.
\end{proof}

\begin{prop}
  \label{prop:pi0Hspace}
  If $k = 0$, all of the spaces $X_{n}$ are H-spaces, and all of the
  maps $p_{n}$ are H-maps, then $\phitilde\colon \Ker P \to
  \limone_{n}\pi_{1}X_{n}$ is a homomorphism.
\end{prop}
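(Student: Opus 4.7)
The argument should parallel \propref{prop:homomor}, but since $k=0$ there is no ``last coordinate'' available for $\starhat$; instead, the H-space multiplications will play the role of $\starhat$. Write $\mu_{n}$ for the H-space multiplication on $X_{n}$; because the $p_{n}$ are H-maps, these assemble to an H-space structure on $\lim_{n}X_{n}$ that projects to $\mu_{n}$ on each level. For $\alpha,\bar\alpha$ representing elements of $\Ker P$, I denote the pointwise H-space product of paths or points by $\cdot$, so that $(\sigma\cdot\tau)(t)=\mu_{n}(\sigma(t),\tau(t))$; in particular $\alpha\cdot\bar\alpha$ represents the product $[\alpha][\bar\alpha]$ in $\pi_{0}\lim_{n}X_{n}$, and its projection to $X_{n}$ is $\alpha_{n}\cdot\bar\alpha_{n}$.

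The first step is to choose nullhomotopies $\beta_{n}$ of $\alpha_{n}$ and $\bar\beta_{n}$ of $\bar\alpha_{n}$, and then to use $\beta_{n}\cdot\bar\beta_{n}\colon I\to X_{n}$ as a nullhomotopy of $\alpha_{n}\cdot\bar\alpha_{n}$; it begins at $\mu_{n}(*,*)=*$ and ends at $\alpha_{n}\cdot\bar\alpha_{n}$. The level $n$ representative of $\phitilde\bigl([\alpha][\bar\alpha]\bigr)$ is then
\[
(\beta_{n}\cdot\bar\beta_{n})*\bigl(p_{n+1}\circ(\beta_{n+1}\cdot\bar\beta_{n+1})^{-1}\bigr).
\]
Next I would rewrite this using two elementary identities. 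First, because $p_{n+1}$ is an H-map and path inversion distributes over the pointwise H-product, $p_{n+1}\circ(\beta_{n+1}\cdot\bar\beta_{n+1})^{-1}=(p_{n+1}\circ\beta_{n+1}^{-1})\cdot(p_{n+1}\circ\bar\beta_{n+1}^{-1})$. Second, for any paths with matching endpoints one has the on-the-nose equality $(f_{1}\cdot g_{1})*(f_{2}\cdot g_{2})=(f_{1}*f_{2})\cdot(g_{1}*g_{2})$, since both sides take the pointwise H-product of the concatenations. Applying these, the displayed representative equals
\[
\bigl(\beta_{n}*(p_{n+1}\circ\beta_{n+1}^{-1})\bigr)\cdot\bigl(\bar\beta_{n}*(p_{n+1}\circ\bar\beta_{n+1}^{-1})\bigr),
\]
which is the pointwise H-product of the level $n$ representatives of $\phitilde([\alpha])$ and $\phitilde([\bar\alpha])$. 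The last step is to invoke the Eckmann--Hilton argument: on $\pi_{1}X_{n}$ of an H-space, the operation induced by $\mu_{n}$ agrees with loop concatenation (and both are commutative), so the loop above also represents the product of $\phitilde([\alpha])_{n}$ and $\phitilde([\bar\alpha])_{n}$ in the group used to form $\limone_{n}\pi_{1}X_{n}$.

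The main obstacle is cosmetic rather than substantive: one must know that the pointwise H-product $\beta_{n}\cdot\bar\beta_{n}$ is a genuine nullhomotopy, i.e., that $\mu_{n}(*,*)=*$ on the nose, and that the H-maps $p_{n}$ commute with $\mu$ on the nose. Both are standard strictness assumptions on H-spaces and H-maps (and can always be arranged up to basepoint-preserving homotopy without affecting the class in $\limone_{n}\pi_{1}X_{n}$, by \propref{prop:welldefined}). Once these are in place the argument is formally identical to \propref{prop:homomor}, with the role of $\starhat$ taken over by pointwise H-multiplication.
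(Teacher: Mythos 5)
Your proposal is correct and follows essentially the same route as the paper: form the H-space product nullhomotopy $\beta_{n}\startilde\bar\beta_{n}$ of $\alpha_{n}\startilde\bar\alpha_{n}$, observe that the resulting level-$n$ representative of $\phitilde([\alpha][\bar\alpha])$ coincides with the H-product of the representatives of $\phitilde([\alpha])$ and $\phitilde([\bar\alpha])$, and conclude via the fact that the H-space product induces the group operation on $\pi_{1}$. You merely make explicit the interchange identity and the Eckmann--Hilton step that the paper compresses into ``those two maps are equal'' and ``the multiplication in both $\pi_{0}$ and $\pi_{1}$ of an H-space can be defined using the H-space product.''
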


\begin{proof}
  If $W$ is a space, $X$ is an H-space, and $f,g\colon W \to X$ are
  maps, then we denote the product of $f$ and $g$ defined using the
  H-space structure of $X$ as $f\startilde g$.

  Let $\alpha,\bar\alpha\colon I^{0} \to \lim_{n}X_{n}$ be maps such
  that the elements $[\alpha]$ and $[\bar\alpha]$ of
  $\pi_{0}\lim_{n}X_{n}$ are in the kernel of $P$.  For every $n \ge
  0$ we let $\alpha_{n} = P_{n}\circ\alpha$, let $\bar\alpha_{n} =
  P_{n}\circ\bar\alpha$, and choose nullhomotopies $\beta_{n}\colon I
  \to X_{n}$ of $\alpha_{n}$ and $\bar\beta_{n}\colon I \to X_{n}$ of
  $\bar\alpha$.

  The multiplication in both $\pi_{0}$ and $\pi_{1}$ of an H-space can
  be defined using the H-space product, and so the product of
  $[\alpha]$ and $[\bar\alpha]$ in $\pi_{0}\lim_{n}X_{n}$ is $[\alpha
  \startilde \bar\alpha]$.  If for all $n \ge 0$ we let
  $(\alpha\startilde \bar\alpha)_{n} = P_{n}\circ (\alpha\startilde
  \bar\alpha)$, then $\beta_{n}\startilde \bar\beta_{n}$ is a
  nullhomotopy of $(\alpha\startilde \bar\alpha)_{n} = \alpha_{n}
  \startilde \bar\alpha_{n}$.  Thus, $\phi(\alpha \startilde
  \bar\alpha)$ is represented by
  \begin{displaymath}
    \bigl((\beta_{n}\startilde \bar\beta_{n}) *
    \bigl(p_{n+1}\circ(\beta_{n+1} \startilde
    \bar\beta_{n+1})^{-1}\bigr)\bigr)_{n \ge 0}
  \end{displaymath}
  while $\phi(\alpha) \startilde \phi(\bar\alpha)$ is represented by
  \begin{displaymath}
    \bigl(\bigl(\beta_{n}*(p_{n+1}\circ\beta_{n+1}^{-1})\bigr) \startilde
    \bigl(\bar\beta_{n} * (p_{n+1}\circ\bar\beta_{n+1}^{-1})\bigr)
    \bigr)_{n\ge 0} \Period
  \end{displaymath}
  Since those two maps are equal, $\phi(\alpha\startilde \bar\alpha) =
  \phi(\alpha)\startilde \phi(\bar\alpha)$, and so
  $\phitilde\bigl([\alpha][\bar\alpha]\bigr) =
  \phitilde\bigl([\alpha]\bigr) \phitilde\bigl([\bar\alpha]\bigr)$.
\end{proof}

\begin{proof}[Proof of \thmref{thm:HomGrps}]
  This follows from \propref{prop:surj}, \propref{prop:welldefined},
  \propref{prop:DefOnHomCls}, \propref{prop:homomor},
  \propref{prop:surjective}, \propref{prop:injective}, and
  \propref{prop:pi0Hspace}.
\end{proof}

\begin{bibdiv} 
  \begin{biblist}

\bib{YM}{book}{
   author={Bousfield, A. K.},
   author={Kan, D. M.},
   title={Homotopy limits, completions and localizations},
   series={Lecture Notes in Mathematics, Vol. 304},
   publisher={Springer-Verlag, Berlin-New York},
   date={1972},
   pages={v+348},
}

\bib{Cohen-aarhus}{article}{
   author={Cohen, Joel M.},
   title={Homotopy groups of inverse limits},
   conference={
      title={Proceedings of the Advanced Study Institute on Algebraic
      Topology },
      address={Aarhus Univ., Aarhus},
      date={1970},
   },
   book={
      publisher={Mat. Inst., Aarhus Univ., Aarhus},
   },
   date={1970},
   pages={29--43. Various Publ. Ser., No. 13},
}

\bib{Cohen}{article}{
   author={Cohen, Joel M.},
   title={The homotopy groups of inverse limits},
   journal={Proc. London Math. Soc. (3)},
   volume={27},
   date={1973},
   pages={159--177},
   issn={0024-6115},
}

\bib{RHT}{article}{
   author={Quillen, Daniel},
   title={Rational homotopy theory},
   journal={Ann. of Math. (2)},
   volume={90},
   date={1969},
   pages={205--295},
   issn={0003-486X},
}

\bib{Gray}{book}{
   author={Gray, Brayton I.},
   title={OPERATIONS AND A PROBLEM OF HELLER},
   note={Thesis (Ph.D.)--The University of Chicago},
   publisher={ProQuest LLC, Ann Arbor, MI},
   date={1965},
   pages={(no paging)},
}

\bib{Vogt}{article}{
   author={Vogt, R. M.},
   title={On the dual of a lemma of Milnor},
   conference={
      title={Proceedings of the Advanced Study Institute on Algebraic
      Topology (1970), Vol. III},
   },
   book={
      publisher={Mat. Inst., Aarhus Univ., Aarhus},
   },
   date={1970},
   pages={632--648. Various Publ. Ser., No. 13},
}

  \end{biblist}
\end{bibdiv}

\end{document}